\newtheorem{theorem}{Theorem}
\newtheorem{proposition}{Proposition}
\theoremstyle{definition}
\newtheorem*{definition}{Definition}
\newtheorem{example}{Example}
\theoremstyle{remark}
\newcommand{\N}{\mathbb{N}}
\newcommand{\F}{\mathcal{F}}
\newcommand{\G}{\mathcal{G}}
\newcommand{\Pa}{\mathcal{P}}
\newcommand{\flim}{\mathcal{F}\!\!-\!\!\lim}
\newcommand{\e}{\varepsilon}
\newcommand{\w}{\widetilde}
\newcommand{\abs}[1]{\lvert#1\rvert}
\newcommand{\n}[1]{\|#1\|}
\newcommand{\supp}{\mathrm{supp}_{(f_n)}}
\begin{document}
\title[$\F$-bases with brackets]{$\F$-bases with brackets and with individual\\ brackets in Banach spaces}

\author[T. Kochanek]{Tomasz Kochanek}
\address{Institute of Mathematics, University of Silesia, Bankowa 14, 40-007 Katowice, Poland}
\curraddr{}
\email{t\_kochanek@wp.pl}
\thanks{}

\subjclass[2010]{Primary 46B15}
\date{}
\dedicatory{}
\keywords{Schauder basis, pseudointersection number, Baire Category Theorem, Martin's axiom}

\begin{abstract}
We provide a partial answer to the question of Vladimir Kadets whether given an $\F$-basis of a~Banach space $X$, with respect to some filter $\F\subset\Pa(\N)$, the coordinate functionals are continuous. The answer is positive if the character of $\F$ is less than $\mathfrak{p}$. In this case every $\F$-basis is an~$M$-basis with brackets which are determined by an element of $\F$.
\end{abstract}

\maketitle

\section{Introduction}
Given any filter $\F$ of subsets of $\N$ and a~Banach space $X$ we say that a sequence $(e_n)_{n=1}^\infty$ is an $\F$-{\it basis} for $X$ if and only if for each $x\in X$ there is a unique sequence of scalars $(a_n)_{n=1}^\infty$ such that $$x=\flim_{n\to\infty}\sum_{k=1}^na_ke_k$$in the norm topology of $X$ (i.e. for each $\e>0$ there is a~set $A\in\F$ such that $\n{x-\sum_{k=1}^na_ke_k}<\e$ for $n\in A$). In such a case we shall define the coordinate functionals by $e_n^\ast(x)=a_n$ and the partial sum projections by $S_n(x)=\sum_{k=1}^ne_k^\ast(x)e_k$ for $n\in\N$. Of course, all these maps are linear. 

The present paper is motivated by a~question posed by V.~Kadets during the $4$th conference {\it Integration, Vector Measures and Related Topics} who asked whether it is true in general that $e_n^\ast$ are continuous. Let us note that continuity of coordinate functionals has been usually included in the definition of $\F$-basis (cf. \cite{connor_ganichev_kadets}, \cite{ganichev_kadets}). Of particular interest is the case where $\F=\F_{st}$ is the filter of statistical convergence defined by $$\F_{st}=\Bigl\{A\subset\N\colon\lim_{n\to\infty}\frac{1}{n}\abs{A\cap\{1,\ldots ,n\}}=1\Bigr\}.$$

For any filter $\F$ of subsets of $\N$ let $\chi(\F)$ stand for its {\it character}, that is, the minimal cardinality of a~subfamily of $\F$ which generates $\F$: $$\chi(\F)=\min\bigl\{\abs{\mathcal{B}}\colon\mathcal{B}\subset\F,\,\,\forall_{A\in\F}\,\exists_{B\in\mathcal{B}}\,\,B\subseteq A\bigr\} .$$We will show that the answer to Kadets' question is positive when the character of $\F$ is less than $\mathfrak{p}$, the {\it pseudointersection number}, which is the least cardinal number such that $P(\mathfrak{p}^+)$ is false, where $P(\kappa)$ is the following statement:
\begin{itemize*}
\item[$P(\kappa)$:] If $\mathscr{A}$ is a~family of subsets of $\N$ such that $\abs{\mathscr{A}}<\kappa$ and $A_1\cap\ldots\cap A_k$ is infinite for any $A_1,\ldots ,A_k\in\mathscr{A}$, then there is an~infinite set $B\subset\N$ such that $B\setminus A$ is finite for each $A\in\mathscr{A}$. 
\end{itemize*}

It is known that $\omega_1\leq\mathfrak{p}\leq\mathfrak{c}$ and that $\mathfrak{p}=\mathfrak{c}$ provided we assume Martin's axiom (this is known as Booth's lemma; cf. \cite[Theorem 11C]{fremlin}). We thus obtain continuity of coordinate functionals associated with $\F$-bases for which $\chi(\F)\leq\omega$ (i.e. $\F$ is countably generated) and, under Martin's axiom, for which $\chi(\F)<\mathfrak{c}$. In fact, we will see (Theorem \ref{T1}) that any $\F$-basis $(e_n)_{n=1}^\infty$ of a~Banach space $X$, with $\chi(\F)<\mathfrak{p}$, is an~$M$-{\it basis with brackets} (cf. \cite{kadets}), that is, there is a~sequence $n_1<n_2<\ldots$ of natural numbers such that for each $x\in X$ we have 
\begin{equation*}
x=\lim_{k\to\infty}\sum_{j=1}^{n_k}e_j^\ast(x)e_j.
\end{equation*}
Of course, every such basis generates a~finite-dimensional Schauder decomposition of $X$. The inequality $\chi(\F)<\mathfrak{p}$ also implies that the set $\{n_1<n_2<\ldots\}$ may be required to be a~member of $\F$.

My first proof of continuity of coordinate functionals was working for countably generated filters and it was D.H.~Fremlin who indicated that the argument should go through for some models where the Baire Category Theorem is valid for uncountably many meagre sets. This led me to the condition $\chi(\F)<\mathfrak{p}$.

In Section 3 we introduce the notion of $\F$-{\it basis with individual brackets}, analogues to the one of $M$-{\it basis with individual brackets} (see definitions therein), and we show that many $\F$-bases which arise naturally from Schauder bases belong to this class (Theorem \ref{PP}).

It should be mentioned that since the statistical filter $\F_{st}$ is {\it tall} (i.e. every infinite subset of $\N$ contains an~infinite subset belonging to the dual ideal of $\F_{st}$), we have $\chi(\F_{st})\geq\mathfrak{p}$, so the question posed by Kadets remains unanswered in the case $\F=\F_{st}$.

\section{Continuity of coordinate functionals}
Hereinafter $\F$ stands for a~filter of subsets of $\N$ and $(e_n)_{n=1}^\infty$ is an~$\F$-basis of a~Banach space $X$ unless otherwise stated. The coordinate functionals and the partial sum projections corresponding to $(e_n)_{n=1}^\infty$ will be denoted by $e_n^\ast$ and $S_n$.

\begin{proposition}\label{L1}
For every $A\in\F$ the space $$X_A=\bigl\{x\in X:\,\sup_{\nu\in A}\n{S_\nu(x)}<\infty\bigr\},$$equipped with a~norm $\n{\cdot}_A$ defined by $$\n{x}_A=\sup_{\nu\in A}\n{S_\nu(x)},$$is a~Banach space. 
\end{proposition}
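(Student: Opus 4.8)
The plan is to first confirm that $\n{\cdot}_A$ really is a norm which dominates the ambient norm, and then to run the standard Cauchy-sequence argument for completeness, the essential twist being that the limit must be identified \emph{without} assuming that the partial sum projections $S_\nu$ are continuous on $X$.

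First I would check the norm axioms. Homogeneity and subadditivity are immediate from the linearity of each $S_\nu$ together with the fact that a supremum of seminorms is subadditive. For definiteness, suppose $\n{x}_A=0$, i.e. $S_\nu(x)=0$ for all $\nu\in A$. Since $(e_n)_{n=1}^\infty$ is an $\F$-basis, $x=\flim_{n}S_n(x)$, so for every $\e>0$ there is $B\in\F$ with $\n{x-S_n(x)}<\e$ whenever $n\in B$; picking any $n\in A\cap B$, a set which belongs to $\F$ and is therefore nonempty, gives $\n{x}=\n{x-S_n(x)}<\e$, so $x=0$. The same device yields the domination $\n{x}\le\n{x}_A$ for $x\in X_A$: for $n\in A\cap B$ we have $\n{x}\le\n{x-S_n(x)}+\n{S_n(x)}<\e+\n{x}_A$. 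In particular the inclusion of $(X_A,\n{\cdot}_A)$ into $(X,\n{\cdot})$ is norm-decreasing.

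Next I would take a sequence $(x_m)_{m=1}^\infty$ that is $\n{\cdot}_A$-Cauchy. By the domination it is $\n{\cdot}$-Cauchy, hence norm-convergent to some $x\in X$. Furthermore, for all $m,l$ and all $\nu\in A$ one has $\n{S_\nu(x_m)-S_\nu(x_l)}=\n{S_\nu(x_m-x_l)}\le\n{x_m-x_l}_A$, so $(S_\nu(x_m))_m$ is Cauchy in $X$ uniformly in $\nu\in A$; denote its limit by $z_\nu$. Because $S_\nu(x_m)$ lies in the finite-dimensional subspace $\mathrm{span}(e_1,\ldots,e_\nu)$ and the $e_k$ are linearly independent (by uniqueness of the expansion of $0$), each scalar sequence $(e_k^\ast(x_m))_m$ converges, say to $c_k$, and $z_\nu=\sum_{k=1}^\nu c_ke_k$. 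Letting $l\to\infty$ in the uniform Cauchy estimate also gives, for all large $m$, the bound $\sup_{\nu\in A}\n{S_\nu(x_m)-z_\nu}\le\e$.

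The decisive step is to show that the coordinates $c_k$ are actually the $\F$-expansion coefficients of $x$. For this I would fix $\e>0$, choose $m$ so large that simultaneously $\n{x-x_m}<\e$ and $\sup_{\nu\in A}\n{S_\nu(x_m)-z_\nu}\le\e$, and then use $x_m=\flim_nS_n(x_m)$ to find $B\in\F$ with $\n{x_m-S_n(x_m)}<\e$ for $n\in B$. For $n\in A\cap B\in\F$, writing $z_n=\sum_{k=1}^nc_ke_k$, the triangle inequality $\n{x-z_n}\le\n{x-x_m}+\n{x_m-S_n(x_m)}+\n{S_n(x_m)-z_n}<3\e$ then shows that $x=\flim_n\sum_{k=1}^nc_ke_k$. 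By uniqueness of the $\F$-basis representation, $e_k^\ast(x)=c_k$, and hence $S_\nu(x)=z_\nu$ for every $\nu\in A$. I expect this to be the main obstacle: one cannot simply pass $S_\nu$ through the limit $x_m\to x$, since continuity of $S_\nu$ on $X$ is exactly what is not yet available, so the coefficients of the limit must be recovered indirectly through the $\F$-limit and its uniqueness. Once $S_\nu(x)=z_\nu$ is established, the inequality $\n{x}_A=\sup_{\nu\in A}\n{z_\nu}\le\e+\n{x_m}_A<\infty$ shows $x\in X_A$, while $\n{x_m-x}_A=\sup_{\nu\in A}\n{S_\nu(x_m)-z_\nu}\le\e$ for large $m$ shows $x_m\to x$ in $\n{\cdot}_A$, so that $(X_A,\n{\cdot}_A)$ is complete.
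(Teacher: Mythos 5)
Your proposal is correct and follows essentially the same route as the paper's own proof: check the norm axioms via the filter argument, pass from $\n{\cdot}_A$-Cauchyness to an ambient-norm limit $x$, extract the finite-dimensional limits $z_\nu$ of $S_\nu(x_m)$ and the candidate coefficients $c_k$, identify $x=\flim_n\sum_{k=1}^n c_ke_k$ by a three-epsilon estimate on a set $A\cap B\in\F$, and invoke uniqueness of $\F$-expansions to get $S_\nu(x)=z_\nu$, whence $x\in X_A$ and $\n{x_m-x}_A\to 0$. The only cosmetic difference is that you obtain the coefficients as limits of the scalar sequences $(e_k^\ast(x_m))_m$ using coordinate continuity in finite dimensions, whereas the paper defines $\alpha_j=e_j^\ast(y_\nu)$ and verifies that this is independent of $\nu$ --- the same idea in a slightly different order.
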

\begin{proof}
First, observe that $\n{\cdot}_A$ is indeed a~norm on $X_A$. Homogeneity and the triangle inequality are trivial. Moreover, if $\n{x}_A=0$ then for each $\e>0$ one may find $B\in\F$ such that $\n{S_\nu(x)-x}<\e$ for $\nu\in B$, but then for each $\nu\in A\cap B$, which is non-empty as an~element of $\F$, we have $S_\nu(x)=0$. Hence $\n{x}<\e$ and consequently $x=0$.

Now, assume $(x_n)_{n=1}^\infty$ is a Cauchy sequence in $(X_A,\n{\cdot}_A)$. Then for every $\e>0$ one may find $m\in\N$ such that $$\n{S_\nu(x_m-x_n)}<\e/3\quad\mbox{for each }n\geq m\mbox{ and }\nu\in A.$$ We may choose $\nu$ in such a way that $\n{S_\nu(x_m)-x_m}<\e/3$ and $\n{S_\nu(x_n)-x_n}<\e/3$. These three inequalities give $\n{x_m-x_n}<\e$, which shows that $(x_n)_{n=1}^\infty$ is a Cauchy sequence in $(X_A,\n{\cdot})$. Therefore, there exists $x_0$ in the $\n{\cdot}$-closure of $X_A$ such that
\begin{equation}\label{x_0}
\lim_{n\to\infty}\n{x_n-x_0}=0.
\end{equation}
Similarly, for every $\nu\in A$ and $m,n\in\N$ we have $$\n{S_\nu(x_m)-S_\nu(x_n)}=\n{S_\nu(x_m-x_n)}\leq\n{x_m-x_n}_A,$$which shows that $(S_\nu(x_n))_{n=1}^\infty$ is a Cauchy sequence in $(X,\n{\cdot})$, and each of its elements lies in $\mathrm{span}\{e_j\}_{j\leq\nu}$. Hence, there is $y_\nu\in\mathrm{span}\{e_j\}_{j\leq\nu}$ such that
\begin{equation}\label{y_nu}
\lim_{n\to\infty}\n{S_\nu(x_n)-y_\nu}=0.
\end{equation}

For every $j\in\N$ denote $\alpha_j=e_j^\ast(y_\nu)$ for any $\nu\in A$, $j\leq\nu$. This definition does not depend on the choice of such a $\nu$. Indeed, if $k,\ell\in A$ satisfy $j\leq k\leq\ell$, then the continuity of $e_j^\ast$ on the finite-dimensional subspace $\mathrm{span}\{e_i\}_{i\leq\ell}$ gives $$e_j^\ast(y_k)=e_j^\ast\bigl(\lim_{n\to\infty}S_k(x_n)\bigr)=\lim_{n\to\infty}e_j^\ast(S_k(x_n))=\lim_{n\to\infty}e_j^\ast(S_\ell(x_n))=e_j^\ast(y_\ell).$$
We shall show that $$x_0=\F\!-\!\sum_{n=1}^\infty\alpha_ne_n,$$thus, in particular, $S_\nu(x_0)=y_\nu$ for every $\nu\in A$. To this end fix any $\e>0$ and choose $m\in\N$ such that for each $n\geq m$ we have $\n{S_\nu(x_m)-S_\nu(x_n)}<\e/3$ (for any $\nu\in A$) and $\n{x_m-x_n}<\e/3$. Now, let $B\in\F$ be such that for each $\nu\in B$ we have $\n{S_\nu(x_m)-x_m}<\e/3$. Then $A\cap B\in\F$ and for every $\nu\in A\cap B$ we get:
\begin{equation*}
\begin{split}
\n{y_\nu-x_0} &=\bigl\|\lim_{n\to\infty}S_\nu(x_n)-\lim_{n\to\infty}x_n\bigr\|\\
&\leq\lim_{n\to\infty}\n{S_\nu(x_m)-S_\nu(x_n)}+\n{S_\nu(x_m)-x_m}+\lim_{n\to\infty}\n{x_m-x_n}\leq\e ,
\end{split}
\end{equation*}
in view of \eqref{x_0} and \eqref{y_nu}. This shows that $$x_0=\flim_{\substack{\nu\to\infty\\ \nu\in A}}y_\nu.$$ Moreover, a similar estimate, for an arbitrary $\nu\in A$ and $m\in\N$ chosen as above, yields $$\n{y_\nu}\leq\n{x_0}+{1\over 3}\e+\n{S_\nu(x_m)}+\n{x_m}+{1\over 3}\e\leq {2\over 3}\e+\n{x_0}+\n{x_m}_A+\n{x_m},$$which implies $$\sup_{\nu\in A}\n{S_\nu(x_0)}=\sup_{\nu\in A}\n{y_\nu}<\infty ,$$thus $x_0\in X_A$. Now, for any $n\in\N$ we have 
\begin{equation*}
\begin{split}
\n{x_n-x_0}_A &=\sup_{\nu\in A}\n{S_\nu(x_n)-S_\nu(x_0)}=\sup_{\nu\in A}\n{S_\nu(x_n)-\lim_{m\to\infty}S_\nu(x_m)}\\
&\leq\limsup_{m\to\infty}\sup_{\nu\in A}\n{S_\nu(x_n)-S_\nu(x_m)},
\end{split}
\end{equation*}
which shows that $\lim_{n\to\infty}\n{x_n-x_0}_A=0$ and, consequently, $(X_A,\n{\cdot}_A)$ is a~Banach space.
\end{proof}

\begin{proposition}\label{L2}
If $\chi(\F)<\mathfrak{p}$ then there exists a~set $A\in\F$ such that $X_A=X$.
\end{proposition}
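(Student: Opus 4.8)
The plan is to exploit the combinatorics of $\F$ through the principle $P(\chi(\F)^+)$ rather than a naive category count. Fix a base $\mathcal{B}=\{B_\beta:\beta<\lambda\}\subseteq\F$ with $\lambda=\chi(\F)$, closed under finite intersections (which does not increase its cardinality). We may assume $\F$ is free, since otherwise $\F$ consists of all supersets of a fixed finite set and a moment's reflection shows that $X$ is then finite-dimensional, making the claim trivial. Note first that $X=\bigcup_{A\in\F}X_A$: given $x$, the relation $x=\flim_n S_n(x)$ produces $A\in\F$ with $\n{S_\nu(x)-x}<1$ on $A$, whence $x\in X_A$; and $\n{x}\leq\n{x}_A$ for every $x\in X_A$, so each inclusion $(X_A,\n{\cdot}_A)\hookrightarrow(X,\n{\cdot})$ is norm-decreasing. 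One is tempted to argue that, were every $X_{B_\beta}\neq X$, each would be meagre and $X$ would be a union of $\lambda<\mathfrak{p}$ meagre sets; but this fails for non-separable $X$ (for instance $\ell_2(\om_1)$ is a union of $\om_1$ nowhere dense subspaces), so the filter itself must enter the argument.

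First I would produce continuity of the coordinate functionals from a single pseudointersection. Since $\lambda<\mathfrak{p}$ the principle $P(\lambda^+)$ holds, so $\mathcal{B}$ — which has the strong finite intersection property because $\F$ is free — admits an infinite pseudointersection $P=\{n_1<n_2<\cdots\}$, and since $\mathcal{B}$ is a base, $P\subseteq^\ast A$ for every $A\in\F$. For each $x$ and $\e>0$, choosing $A\in\F$ with $\n{S_\nu(x)-x}<\e$ on $A$ and using $n_k\in A$ for large $k$ gives $S_{n_k}(x)\to x$. Hence $\sup_k\n{S_{n_k}(x)}<\infty$ for every $x$, and the same argument that proves Proposition \ref{L1} (with the sequence $(n_k)$ and ordinary limits in place of $A$ and $\F$-limits) shows that $X$ is complete under $\vert x\vert=\sup_k\n{S_{n_k}(x)}\geq\n{x}$. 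Two comparable complete norms are equivalent, so $\vert\cdot\vert\leq C\n{\cdot}$; thus each $S_{n_k}$ is bounded with $\sup_k\n{S_{n_k}}\leq C$, and since $e_j^\ast=e_j^\ast\circ S_{n_k}$ on the finite-dimensional range for $j\leq n_k$, every $e_j^\ast$ — hence every $S_\nu$ — is continuous.

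Now I would locate the desired $A$ inside $\F$ by a second, reversed application of $P(\lambda^+)$. Put $E_m=\{\nu\in\N:\n{S_\nu}\leq m\}$; it suffices to find $m$ with $E_m\in\F$, for then $A=E_m$ yields $\sup_{\nu\in A}\n{S_\nu(x)}\leq m\n{x}$ and $X_A=X$. Suppose no $E_m$ lies in $\F$. Because $\F$ is free and the $E_m$ increase, every finite intersection from $\mathcal{B}$ reduces to some $B\in\mathcal{B}$, and $B\setminus E_m$ is infinite (otherwise $E_m$ would contain the cofinite-in-$B$ set $B\setminus(\text{finite})\in\F$); hence $\mathcal{G}=\mathcal{B}\cup\{\N\setminus E_m:m\in\N\}$ has the strong finite intersection property and cardinality $\leq\lambda$. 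By $P(\lambda^+)$ it has an infinite pseudointersection $Q=\{q_1<q_2<\cdots\}$. On one hand $Q\subseteq^\ast B_\beta$ for all $\beta$, so $Q$ is a pseudointersection of $\F$ and, as above, $S_{q_i}(x)\to x$ for every $x$; by the Banach–Steinhaus theorem $\sup_i\n{S_{q_i}}<\infty$. On the other hand $Q\subseteq^\ast(\N\setminus E_m)$ means $Q\cap E_m$ is finite for each $m$, i.e.\ $\n{S_{q_i}}\to\infty$ — a contradiction. Hence some $E_m\in\F$, and $A=E_m$ satisfies $X_A=X$.

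The main obstacle is conceptual rather than computational: the non-separable counterexamples show that the bound $\chi(\F)<\mathfrak{p}$ cannot be used by applying Baire category blindly to the spaces $X_B$, so the entire weight of the proof rests on feeding the combinatorics back into the filter. The decisive point is the second use of $P(\lambda^+)$, which turns the assumption ``no $E_m\in\F$'' into an auxiliary family whose pseudointersection is simultaneously a pseudointersection of $\F$ (forcing $\sup_i\n{S_{q_i}}<\infty$ through Banach–Steinhaus) and an index set along which $\n{S_\nu}$ blows up; reconciling these is what yields the contradiction, and verifying the strong finite intersection property of $\mathcal{G}$ is exactly where the freeness of $\F$ is used.
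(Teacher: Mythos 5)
Your second step contains a genuine gap, and it is exactly the point where the filter cannot be replaced by a subsequence. You claim that ``the same argument that proves Proposition~\ref{L1}'' shows that $X$ is complete under $\abs{x}=\sup_k\n{S_{n_k}(x)}$, where $P=\{n_1<n_2<\cdots\}$ is the pseudointersection. But the one non-routine step in the proof of Proposition~\ref{L1} is the identification of the limit: having produced $x_0=\lim_m x_m$ and the consistent vectors $y_\nu=\lim_m S_\nu(x_m)=\sum_{j\le\nu}\alpha_je_j$, one must show $\alpha_j=e_j^\ast(x_0)$, i.e.\ $y_\nu=S_\nu(x_0)$, before one can estimate $\abs{x_m-x_0}$. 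The paper can do this because the convergence $y_\nu\to x_0$ there takes place along sets of $\F$ (this is precisely where $A\in\F$ is used), so $(\alpha_j)$ is an $\F$-expansion of $x_0$ and the uniqueness clause in the definition of an $\F$-basis applies. In your setting the convergence $y_{n_k}\to x_0$ takes place only along $P$, and $P$ need not belong to $\F$ (the paper makes this very point in the proof of Theorem~\ref{T1}: ``there is no reason why $B$ should be an element of $\F$''). Convergence of partial sums along a subsequence is not an $\F$-limit statement, so uniqueness of $\F$-expansions gives you nothing: a priori there could exist a nonzero sequence $(\delta_j)$ with $\sum_{j\le n_k}\delta_je_j\to 0$, since nothing controls the partial sums at indices outside $P$. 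Ruling this out is precisely the statement that bracket expansions have unique coefficients, which the paper deduces only \emph{after} proving continuity of the $e_j^\ast$ (last lines of the proof of Theorem~\ref{T1}). So your continuity step is circular, and everything downstream (including the otherwise correct third step) rests on it.

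The misconception that pushed you off the paper's route is the non-separability worry: it does not apply, because a space with an $\F$-basis is automatically separable. Indeed, for each $x$ and each $k$ one can pick $\nu$ in a set of $\F$ witnessing $\n{S_\nu(x)-x}<1/k$, so $\mathrm{span}\{e_n\}_{n=1}^\infty$ is dense in $X$; hence $X$ is Polish. The paper's proof is then exactly the category count you rejected: $X=\bigcup_{\alpha<\chi(\F)}X_{A_\alpha}$ over a generating family, each $X_{A_\alpha}$ is a Banach space by Proposition~\ref{L1}, so by the Open Mapping Theorem each is either all of $X$ or meagre, and the Baire Category Theorem for fewer than $\mathfrak{p}$ meagre sets in a Polish space yields some $X_{A_\alpha}=X$. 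Two smaller remarks: your reduction to free $\F$ is sound in substance (any $m\in\bigcap\F$ forces $S_m=\mathrm{Id}_X$, so $X$ is finite-dimensional and the claim is easy), but the stated reason --- that a non-free filter consists of all supersets of a fixed finite set --- is false. On the positive side, your third step (the family $\mathcal{B}\cup\{\N\setminus E_m\colon m\in\N\}$, a second pseudointersection, and Banach--Steinhaus) is correct once continuity of the $S_\nu$ is available, and it has the merit of producing an explicit set $E_m=\{\nu\colon\n{S_\nu}\le m\}$ of $\F$ with $X_{E_m}=X$; combined with the paper's proof of continuity it would give a legitimate alternative ending.
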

\begin{proof}
For any $A\in\F$ the identity mapping $i_A\colon (X_A,\n{\cdot}_A)\to (X,\n{\cdot})$ is continuous, since $\n{\cdot}_A\geq\n{\cdot}$. By Proposition \ref{L1} and the Open Mapping Theorem, either $i_A$ is surjective, or its image $X_A$ is a~meagre subset of $(X,\n{\cdot})$.

Let $\{A_\alpha\}_{\alpha<\chi(\F)}\subset\F$ be a~family generating $\F$. For every $x\in X$ there exists a~set $B\in\F$ such that $\sup_{n\in B}\n{S_n(x)}<\infty$, so $x\in X_{A_\alpha}$ for some $\alpha<\chi(F)$. Therefore, $$X=\bigcup_{\alpha<\chi(\F)}X_{A_\alpha},$$ and since the Baire Category Theorem is valid for less than $\mathfrak{p}$ meagre sets in any Polish space (cf. \cite[\S 22C]{fremlin}), not all the subspaces $X_{A_\alpha}$ may be meagre in $(X,\n{\cdot})$. Consequently, there is a~set $A\in\F$ with $X_A=X$.
\end{proof}

\begin{example}\label{exx}
A slight modification of \cite[Example 1]{connor_ganichev_kadets} shows that in general one may not expect that $X_A=X$ for some $A\in\F$. Namely, let $(e_n)_{n=1}^\infty$ be the canonical basis of $\ell_2$ with the coordinate functionals $(e_n^\ast)_{n=1}^\infty$. Put also $x_n=\sum_{i=1}^ne_i$. Then, as it is shown in \cite{connor_ganichev_kadets}, $(x_n)_{n=1}^\infty$ is an $\F_{st}$-basis of $\ell_2$ with the coordinate functionals given by $x_n^\ast=e_n^\ast-e_{n+1}^\ast$. They are, of course, continuous but for any increasing sequence $n_1<n_2<\ldots$ of natural numbers we may define an element $x=\sum_{k=1}^\infty a_ke_k$ of $\ell_2$ such that $\sup_{k\in\N}\n{S_{n_k}(x)}=\infty$.

To this end choose an increasing subsequence $(m_j)_{j=1}^\infty$ of $(n_j)_{j=1}^\infty$ with $m_j>j^4$ and put
$$a_k=\left\{\begin{array}{ll}1/\sqrt[4]{k} & \mbox{if there is }j\in\N\mbox{ such that }k=m_j+1,\\
0 & \mbox{otherwise}.\end{array}\right.$$Repeating the argument from \cite[Example 1]{connor_ganichev_kadets} we get our claim, which shows that in this case $(\ell_2)_A\subsetneq\ell_2$ for every infinite set $A\subset\N$ (not only for every $A\in\F_{st}$).
\end{example}

\begin{theorem}\label{T1}
If $\chi(\F)<\mathfrak{p}$ then any $\F$-basis is an~$M$-basis with brackets and all the coordinate functionals are continuous. Moreover, the equality
\begin{equation}\label{xnk}
x=\lim_{k\to\infty}\sum_{j=1}^{n_k}e_j^\ast(x)e_j
\end{equation}
holds true for each $x\in X$, where the sequence $n_1<n_2<\ldots$ may be chosen in such a~way that $\{n_1,n_2,\ldots\}\in\F$.
\end{theorem}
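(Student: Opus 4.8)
The plan is to use Proposition~\ref{L2} together with the Open Mapping Theorem to bound the partial sum projections $S_\nu$ uniformly over $\nu$ ranging through a single set $A\in\F$, and then to promote the $\F$-convergence of partial sums to ordinary sequential convergence along an enumeration of $A$ by a density argument.

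First I would invoke Proposition~\ref{L2} to fix $A\in\F$ with $X_A=X$. By Proposition~\ref{L1} the norm $\n{\cdot}_A$ is complete on $X$, and since $\n{\cdot}_A\geq\n{\cdot}$ the identity map $(X,\n{\cdot}_A)\to(X,\n{\cdot})$ is a continuous linear bijection between Banach spaces. The Open Mapping Theorem then provides a constant $C$ with $\n{\cdot}_A\leq C\n{\cdot}$, so that
\[
\n{S_\nu(x)}\leq\n{x}_A\leq C\n{x}\qquad(\nu\in A,\ x\in X).
\]
In particular every $S_\nu$ with $\nu\in A$ is bounded, with $\n{S_\nu}\leq C$. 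Writing $A=\{n_1<n_2<\cdots\}$ (an infinite set, as $\F$ is free) I will have produced the desired bracket sequence, together with the membership $\{n_1,n_2,\ldots\}=A\in\F$.

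Next I would upgrade the convergence. The span $E=\mathrm{span}\{e_n:n\in\N\}$ is dense in $X$, since each $x$ is the $\F$-limit of its partial sums $S_n(x)\in E$; and for $y=\sum_{i=1}^N c_ie_i\in E$ the uniqueness of the $\F$-expansion forces $e_i^\ast(y)=c_i$ for $i\leq N$ and $e_i^\ast(y)=0$ otherwise, whence $S_{n_k}(y)=y$ as soon as $n_k\geq N$. Thus $S_{n_k}(y)\to y$ for $y\in E$, and since $\n{S_{n_k}}\leq C$ uniformly in $k$, a routine three-$\e$ argument extends this to $S_{n_k}(x)\to x$ for every $x\in X$; this is precisely \eqref{xnk}.

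Finally I would read off continuity and the $M$-basis property. Fixing $j$ and any $k$ with $n_k\geq j$, the coefficient identification on $E$ applied to $S_{n_k}(x)=\sum_{i=1}^{n_k}e_i^\ast(x)e_i$ yields $e_j^\ast(S_{n_k}(x))=e_j^\ast(x)$, that is, $e_j^\ast=e_j^\ast\circ S_{n_k}$. Since $S_{n_k}$ is bounded while $e_j^\ast$ is automatically continuous on the finite-dimensional space $\mathrm{span}\{e_i\}_{i\leq n_k}$, the functional $e_j^\ast$ is continuous on $X$. Biorthogonality $e_j^\ast(e_i)=\delta_{ij}$ again follows from uniqueness, density of $E$ gives fundamentality, and totality holds because $e_n^\ast(x)=0$ for all $n$ forces $S_{n_k}(x)=0$ and hence $x=0$; together these exhibit $(e_n,e_n^\ast)$ as an $M$-basis with brackets $\{n_k\}$. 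The one genuinely delicate point is the passage from the filter limit to the sequential limit along $\{n_k\}$: filter convergence along $A$ need not yield convergence along an enumeration of $A$, and it is exactly the uniform bound $\n{S_{n_k}}\leq C$---ultimately furnished by Proposition~\ref{L2}---that legitimizes the density reduction.
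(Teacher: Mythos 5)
Your proposal is correct and follows essentially the same route as the paper: Proposition \ref{L2} plus the Open Mapping Theorem applied to $i_A$ yield the uniform bound $\sup_{\nu\in A}\n{S_\nu}\leq C$, enumeration of $A$ gives the bracket sequence inside $\F$, and the density/three-$\e$ argument on $\mathrm{span}\{e_n\}$ gives \eqref{xnk}; your factorization $e_j^\ast=e_j^\ast\circ S_{n_k}$ is just a cleaner packaging of the paper's normalized-sequence contradiction for continuity of the coordinate functionals. The one point worth tightening, exactly as the paper does with its explicit reduction to the case where $\F$ contains no finite sets (equivalently, $X$ infinite-dimensional, which forces $\F$ to contain all cofinite sets), is that your parenthetical ``as $\F$ is free'' and the coefficient identification $e_i^\ast(y)=c_i$ on the span both rest on eventually constant sequences being $\F$-convergent, which is not automatic for an arbitrary filter.
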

\begin{proof}
We may assume that $\F$ does not contain any finite sets, since otherwise $X$ would be finite-dimensional.

Let $A\in\F$ satisfy $X_A=X$. Applying the Open Mapping Theorem to the operator $i_A\colon (X_A,\n{\cdot}_A)\to (X,\n{\cdot})$ we infer that the inverse operator $i_A^{-1}$ is bounded, i.e. there is a~constant $K<\infty$ such that $\n{S_\nu(x)}\leq K\n{x}$ for all $x\in X$ and $\nu\in A$. This easily implies that all the coordinate functionals are continuous.

Indeed, fix any $j\in\N$ and suppose, in search of a~contradiction, that there is a sequence $(x_n)_{n=1}^\infty$ of elements of $X$ such that $\n{x_n}=1$ for $n\in\N$ and $e_j^\ast(x_n)\to\infty$. Pick any $\nu\in A$, $\nu\geq j$. Obviously, $e_1,\ldots ,e_\nu$ are linearly independent and since the finite-dimensional subspace $\mathrm{span}\{e_i\}_{i\leq\nu ,i\not=j}$ is closed, we infer that $$\delta :=\inf\bigl\{\n{e_j+y}: y\in\mathrm{span}\{e_i\}_{i\leq\nu ,i\not=j}\bigr\}>0.$$Since $$S_\nu(x_n)=e_j^\ast(x_n)e_j+\sum_{i=1, i\not=j}^\nu e_i^\ast(x_n)e_i,$$ we have $$\n{S_\nu(x_n)}\geq\delta\cdot\abs{e_j^\ast(x_n)}\xrightarrow[n\to\infty]{}\infty ,$$which contradicts the fact that $S_\nu$ is continuous.

Now, in order to show that $(e_n)_{n=1}^\infty$ is an $M$-basis with brackets one may simply use the definition of $\mathfrak{p}$ to produce an infinite set $B\subset\N$ such that $\abs{B\setminus A_\alpha}<\omega$ for every $A_\alpha$ from some fixed (centered) family $(A_\alpha)_{\alpha<\chi(\F)}$ generating $\F$. Then $S_\nu(x)\to x$ for every $x\in X$ as $\nu\in B$, $\nu\to\infty$. However, there is no reason why $B$ should be an element of $\F$. Instead one may use the following argument for which I am grateful to Vladimir Kadets.

Observe that $(\mathrm{Id}_X-S_\nu)_{\nu\in A}$ is a~uniformly bounded sequence of operators which converges to $0$ on the dense subspace of $X$ spanned by the set $\{e_n\}_{n=1}^\infty$. Let $\{n_1<n_2<\ldots\}$ be an enumeration of $A$. Then equality \eqref{xnk} holds for every $x\in X$. Since $e_n^\ast$'s are all continuous, the coefficients of every such expansion are uniquely determined, hence the basis in question is in fact an $M$-basis with brackets.
\end{proof}

\section{$\F$-bases with individual brackets}
In view of Theorem \ref{T1}, the inequality $\chi(\F)<\mathfrak{p}$ implies that for all $x\in X$ one may find a~common set $A\in\F$ such that $S_\nu(x)$ converge to $x$ as $\nu\in A$ and $\nu\to\infty$, whereas Example \ref{exx} shows that this is not possible in general. These two facts motivate the following definition.
\begin{definition}
A~sequence $(e_n)_{n=1}^\infty$ of elements of a~Banach space $X$ is called an~$\F$-{\it basis with individual brackets} if it is an $\F$-basis of $X$ and for each $x\in X$ there is a~set $A\in\F$ (possibly depending on $x$) such that $$\lim_{\substack{\nu\to\infty\\ \nu\in A}}\n{S_\nu(x)-x}=0.$$
\end{definition} 

This is a~notion similar to that of $M$-bases with individual brackets which was considered by Kadets \cite{kadets}. Recall that $(e_n)_{n=1}^\infty\subset X$ is called an~$M$-{\it basis with individual brackets} if there is a~sequence of functionals $(e_n^\ast)_{n=1}^\infty$ such that $(e_n,e_n^\ast)_{n=1}^\infty$ is a~Markushevich basis (i.e. it is a~biorthogonal system with $\overline{\mathrm{span}}\{e_n\}_{n=1}^\infty=X$ and $\overline{\mathrm{span}}^{w\ast}\{e_n^\ast\}_{n=1}^\infty=X^\ast$) and for each $x\in X$ there exists a~sequence $n_1<n_2<\ldots$ of natural numbers for which \eqref{xnk} holds true.

Kadets \cite{kadets} showed that the space $\ell_2$ admits an~$M$-basis with individual brackets which is not an~$M$-basis with brackets. The basis exhibited by Kadets was in fact an $\F_s$-basis with a~{\it summable filter} $\F_s$ given by $$\F_s=\Bigl\{A\subset\N\colon\sum_{n\in\N\setminus A}\Bigl((n+1)\sum_{k=1}^n\frac{1}{k}\Bigr)^{-1}<\infty\Bigr\}.$$Since $\F_s$ is a~tall filter, we have $\chi(\F_s)\geq\mathfrak{p}$, which, in light of Theorem \ref{T1}, is not accidental.

Obviously, if $\F$ is a $P$-{\it filter} (i.e. for every countable family $\G\subset\F$ there is a~set $A\in\F$ such that $\abs{A\setminus B}<\omega$ for each $B\in\G$) then every $\F$-basis is an~$\F$-basis with individual brackets. The $\F$-bases exhibited in \cite{connor_ganichev_kadets} and \cite{kadets} are also examples of $\F$-bases with individual brackets. Those construction may be generalised in the following way. 

Let $X$ be a~Banach space with a~Schauder basis $(f_n)_{n=1}^\infty$ and let $(\gamma_n)_{n=1}^\infty$ be a~sequence of non-zero scalars such that the series $\sum_{n=1}^\infty\gamma_nf_n$ diverges. We put $$e_n=\sum_{j=1}^n\gamma_jf_j\quad\mbox{and}\quad e_n^\ast=\frac{1}{\gamma_n}f_n^\ast-\frac{1}{\gamma_{n+1}}f_{n+1}^\ast\quad\mbox{for }n\in\N .$$Then it may be easily checked that $(e_n,e_n^\ast)_{n=1}^\infty$ is a~Markushevich basis of $X$ (the fact that $(e_n^\ast)_{n=1}^\infty$ is a~total subset of $X^\ast$ follows from our supposition on the series $\sum_{n=1}^\infty\gamma_nf_n$). Let $(S_n)_{n=1}^\infty$ and $(T_n)_{n=1}^\infty$ be the partial sum projections corresponding to $(e_n)_{n=1}^\infty$ and $(f_n)_{n=1}^\infty$, respectively. Then, we have 
\begin{equation*}
\begin{split}
S_n(x) &=\sum_{j=1}^ne_j^\ast(x)e_j=\sum_{j=1}^n\Bigl(\frac{1}{\gamma_j}f_j^\ast(x)-\frac{1}{\gamma_{j+1}}f_{j+1}^\ast(x)\Bigr)\sum_{k=1}^j\gamma_kf_k\\
&=\sum_{k=1}^n\sum_{j=k}^n\gamma_k\Bigl(\frac{1}{\gamma_j}f_j^\ast(x)-\frac{1}{\gamma_{j+1}}f_{j+1}^\ast(x)\Bigr)f_k\\
&=\sum_{k=1}^n\Bigl(f_k^\ast(x)-\frac{\gamma_k}{\gamma_{n+1}}f_{n+1}^\ast(x)\Bigr)f_k=T_n(x)-\frac{f_{n+1}^\ast(x)}{\gamma_{n+1}}\sum_{j=1}^n\gamma_jf_j,
\end{split}
\end{equation*}
whence
$$\n{S_n(x)-T_n(x)}=\frac{\abs{f_{n+1}^\ast(x)}}{\abs{\gamma_{n+1}}}\left\|\sum_{j=1}^n\gamma_jf_j\right\|\quad\mbox{for }x\in X\mbox{ and }n\in\N.$$Consequently, $(e_n)_{n=1}^\infty$ is an $\F$-basis of $X$, where $\F$ is the~filter generated by the sets of the form $$
\left\{n\in\N\colon \frac{\abs{f_{n+1}^\ast(x)}}{\abs{\gamma_{n+1}}}\left\|\sum_{j=1}^n\gamma_jf_j\right\|<1\right\}\quad (x\in X),
$$
provided only that the intersection of any finite number of these sets is infinite (this condition, jointly with the fact that $(e_n,e_n^\ast)_{n=1}^\infty$ is biorthogonal, guarantees that every expansion with respect to $(e_n)_{n=1}^\infty$ is unique). 

The reason why all the $\F$-bases arising in this manner are $\F$-bases with individual brackets is that for any $x\in X$ and $n\in\N$ the difference $S_n(x)-T_n(x)$ involves only the $(n+1)$st coordinate of $x$ with respect to the basis $(f_n)_{n=1}^\infty$. We shall see that this is a~special case of a~more general result.

To formulate the announced result we need a~piece of notation. Namely, if $(f_n,f_n^\ast)_{n=1}^\infty$ is a~Schauder basis of a~Banach space $X$ and $T\colon X\to X$ is a~finite-rank operator which may be written as $$T(x)=\sum_{j=1}^kf_{n_j}^\ast(x)x_j\quad (x\in X),$$ with some non-zero $x_1,\ldots ,x_k\in X$ and some natural numbers $n_1<\ldots <n_k$, then we write $\supp(T)$ for the set $\{n_1,\ldots ,n_k\}$. If $A,B\subset\N$ are finite then we write $A<B$ provided $\max A<\min B$.

\begin{theorem}\label{PP}
Let $(e_n)_{n=1}^\infty$ be an~$\F$-basis of a~Banach space $X$ with partial sum projections $(S_n)_{n=1}^\infty$. Suppose that there is a~Schauder basis $(f_n)_{n=1}^\infty$ of $X$ with partial sum projections $(T_n)_{n=1}^\infty$ such that for some set $\{n_1<n_2<\ldots\}\in\F$ we have $$\supp(S_{n_1}-T_{n_1})<\supp(S_{n_2}-T_{n_2})<\ldots .$$Then $(e_n)_{n=1}^\infty$ is an~$\F$-basis with individual brackets.
\end{theorem}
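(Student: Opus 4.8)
The plan is to fix $x\in X$ and to produce a single set $A\in\F$, $A\subseteq C:=\{n_1<n_2<\ldots\}$, along which the partial sums converge to $x$ in norm. Writing $R_k=S_{n_k}-T_{n_k}$, I would split
$$S_{n_k}(x)-x=R_k(x)+\bigl(T_{n_k}(x)-x\bigr).$$
Since $(f_n)_{n=1}^\infty$ is a Schauder basis, $T_{n_k}(x)\to x$ for every $x$, so the second summand tends to $0$ no matter which subset of $C$ we pass to. The problem thus reduces to finding $A\in\F$ with $A\subseteq C$ such that $\n{R_k(x)}\to 0$ as $k\to\infty$ with $n_k\in A$.

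The next step is to show that $\flim$ of $\n{R_k(x)}$ along $C$ equals $0$. Because $(e_n)_{n=1}^\infty$ is an $\F$-basis we have $\flim_n S_n(x)=x$, so for each $\e>0$ the set $\{n:\n{S_n(x)-x}<\e\}$ lies in $\F$; intersecting it with $C\in\F$ and absorbing the finitely many initial indices on which $\n{T_{n_k}(x)-x}$ is not yet $<\e$ (harmless since, as in the proof of Theorem \ref{T1}, we may assume $\F$ contains no finite set and hence is free), the three-term estimate yields $\{n_k:\n{R_k(x)}<\e\}\in\F$ for every $\e>0$. Equivalently, each bad set $B_j:=\{n_k:\n{R_k(x)}\ge 1/j\}$ is $\F$-negligible, i.e. $C\setminus B_j\in\F$.

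The decisive step is to upgrade this $\F$-convergence to genuine convergence along one member of $\F$, and this is where the support hypothesis enters. The sets $B_j$ need not be finite — Example \ref{exx} already exhibits an $x$ for which each $B_j$ is infinite — so they cannot simply be deleted, nor may $\F$ be assumed to be a $P$-filter. Instead I would exploit that the supports $\supp(R_k)$ are pairwise disjoint increasing blocks of the $(f_n)$-expansion, so that each $R_k(x)$ reads off a fresh, disjoint block of coordinates of $x$ and the $B_j$ occupy increasingly separated positions in $C$. Mirroring the density computation of Example \ref{exx}, I would attempt to build $A$ by removing from $C$ a union $\bigcup_j\{n_k\in B_j:\min\supp(R_k)>p_j\}$ of tails of the $B_j$, with $p_1<p_2<\cdots$ chosen rapidly enough that the removed set is still $\F$-negligible while $A\cap B_j$ remains finite for every $j$. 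For such an $A$ one would have $A\in\F$ and $\n{R_k(x)}\to 0$ along $A$, hence $S_{n_k}(x)\to x$, which is precisely the individual-bracket property for $x$.

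The main obstacle is exactly this last step: verifying that the set $A$ so described genuinely belongs to $\F$, equivalently that the union of tails removed from $C$ stays $\F$-negligible even though it is an infinite subset of $C$. The disjointness and monotone ordering of the supports is the structural feature meant to control this, forcing the bad blocks into thin, separated stretches of $C$ (just as the fast growth $m_j>j^4$ does in Example \ref{exx}); carrying out this control for an arbitrary filter, without recourse to the $P$-filter property, is the crux. Once $A\in\F$ is secured, the remainder is the routine three-term estimate already used above.
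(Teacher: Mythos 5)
Your reduction is sound and matches the paper's starting point: since $T_\nu(x)\to x$ in norm, everything hinges on producing a single set $A\in\F$, $A\subseteq C$, along which $\n{R_k(x)}\to 0$, and you correctly observe that the $\F$-basis property by itself only yields that each bad set $B_j=\{n_k\colon\n{R_k(x)}\geq 1/j\}$ satisfies $C\setminus B_j\in\F$. But the step you yourself flag as the crux is a genuine gap, and the mechanism you propose for it cannot work as stated. Membership in an abstract filter is completely insensitive to \emph{where} the supports $\supp(R_k)$ sit inside $\N$: deleting from $C$ tails of the $B_j$ determined by cut-offs $p_j$ on $\min\supp(R_k)$ is just deleting a countable union of $\F$-negligible sets, and no growth condition on $(p_j)$ can force the remainder to lie in $\F$ --- closure of $\F$ under such countable prunings is essentially the $P$-filter property, which you correctly note is unavailable. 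The density heuristics of Example \ref{exx} give no leverage either, since the theorem's $\F$ is arbitrary. In short, the only access the hypotheses give to membership in $\F$ is to apply the $\F$-basis property to \emph{some element of $X$}, and your argument only ever applies it to the fixed vector $x$.

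The paper's proof supplies exactly the missing idea, and it is where the disjoint-support hypothesis is really used (not to ``separate the bad blocks inside $C$,'' as you suggest). Since the operators $R_j=S_{n_j}-T_{n_j}$ read only disjoint consecutive blocks $(r_{j-1},r_j]$ of $f$-coordinates, where $r_j=\max\supp(R_j)$, one can rescale the $f$-expansion of $x$ blockwise: choose $\nu_1<\nu_2<\ldots$ so that every block of $\sum_n f_n^\ast(x)f_n$ beginning after $\nu_{k-1}$ has norm less than $k^{-3}$, and set $y=\sum_n\lambda_nf_n^\ast(x)f_n$, where $\lambda_n=k$ for $r_{\nu_{k-1}}<n\leq r_{\nu_k}$; the series converges because the $k$-th group contributes norm at most $k\cdot k^{-3}=k^{-2}$. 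Then $R_j(y)=k_jR_j(x)$ \emph{exactly}, with $k_j\to\infty$ as $j\to\infty$, because $R_j$ sees only coordinates from the $j$-th block and there the coordinates of $y$ are $k_j$ times those of $x$. Now one applies the $\F$-basis property to the auxiliary vector $y$: the set $D_y=\{n\colon\n{S_n(y)-T_n(y)}<1\}$ belongs to $\F$, and on $A:=C\cap D_y\in\F$ one gets $\n{R_j(x)}=\n{R_j(y)}/k_j<1/k_j\to 0$. Thus the required member of $\F$ is obtained from a single application of the filter property to $y$, with no infinite intersection or removal inside $\F$ ever needed; this amplification construction is the real content of the theorem and is absent from your proposal.
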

\begin{proof}
By the definition of $\F$-basis, the set $$D_x:=\bigl\{n\in\N\colon\n{S_n(x)-T_n(x)}<1\bigr\}$$belongs to $\F$ for each $x\in X$.

Fix any $x\in X$. We shall find $y\in X$ such that for arbitrarily large $M>0$ the inequality
\begin{equation}\label{yy}
\n{S_{n_j}(y)-T_{n_j}(y)}\geq M\cdot\n{S_{n_j}(x)-T_{n_j}(x)}
\end{equation}
holds true for all but finitely many $j\in\N$. Then, by putting $A=\{n_1,n_2,\ldots\}\cap D_y$, we would obtain a~set $A\in\F$ for which $$\lim_{\substack{\nu\to\infty\\ \nu\in A}}\n{S_\nu(x)-T_\nu(x)}=0,$$which would in turn imply that $x\in\w{X}_A$.

Define a sequence $1\leq r_1<r_2<\ldots$ by $$r_j=\max\supp(S_{n_j}-T_{n_j}).$$Then for each $j\in\N$ we have 
\begin{equation}\label{ST}
S_{n_j}(z)-T_{n_j}(z)=\sum_{i=r_{j-1}+1}^{r_j}f_i^\ast(z)x_{i,j}\quad (z\in X)
\end{equation}
for some $x_{i,j}\in X$ (we put $r_0=0$). We claim that there exists a~sequence $1\leq\nu_1<\nu_2<\ldots$ of natural numbers such that if we define a~sequence $(\lambda_n)_{n=1}^\infty$ by saying that $\lambda_n=k$ if and only if $r_{\nu_{k-1}}<n\leq r_{\nu_k}$ (where $\nu_0=0$), then the series
\begin{equation}\label{y_def}
y:=\sum_{n=1}^\infty\lambda_nf_n^\ast(x)f_n
\end{equation}
converges in $(X,\n{\cdot})$. Indeed, we may define $(\nu_j)_{j=1}^\infty$ inductively by first choosing $\nu_1\geq 1$ such that for any $\nu_1\leq p\leq q$ we have $$\left\|\sum_{n=p}^qf_n^\ast(x)f_n\right\|<2^{-3}$$ and, after defining $1\leq\nu_1<\ldots<\nu_{j-1}$, we pick $\nu_j>\nu_{j-1}$ such that for any $\nu_j\leq p\leq q$ we have $$\left\|\sum_{n=p}^qf_n^\ast(x)f_n\right\|<(j+1)^{-3}.$$Now, if $(\lambda_n)_{n=1}^\infty$ is defined as above, then for any $\e>0$ we may find $k\in\N$ so large that $\sum_{j\geq k}j^{-2}<\e$. Then for any $m>\nu_{k-1}$ we have
\begin{equation*}
\begin{split}
\left\|\sum_ {n=\nu_{k-1}+1}^m\lambda_nf_n^\ast(x)f_n\right\| &\leq\sum_{j=k}^\infty\left\|\sum_{n=\nu_{j-1}+1}^{\nu_j}\lambda_nf_n^\ast(x)f_n\right\|\\
&=\sum_{j=k}^\infty j\cdot\left\|\sum_{n=\nu_{j-1}+1}^{\nu_j}f_n^\ast(x)f_n\right\|<\sum_{j=k}^\infty j^{-2}<\e,
\end{split}
\end{equation*}
which shows that the series given by \eqref{y_def} converges.

Now, fix any $j\in\N$. There is a~unique $k\in\N$ such that $(r_{j-1},r_j]\subset (r_{\nu_{k-1}},r_{\nu_k}]$ and for any $r_{j-1}<i\leq r_j$ we have $r_{\nu_{k-1}}<i\leq r_{\nu_k}$. Hence for any such $i$ we have $\lambda_i=k$. Then, by \eqref{y_def}, we get $f_i^\ast(y)=\lambda_if_i^\ast(x)=kf_i^\ast(x)$ for $r_{j-1}<i\leq r_j$, so formula \eqref{ST} yields $$\n{S_{n_j}(y)-T_{n_j}(y)}=k\cdot\n{S_{n_j}(x)-T_{n_j}(x)}.$$Therefore, inequality \eqref{yy} is valid whenever $j$ satisfies $$(r_{j-1},r_j]\subset\bigcup_{k\geq M}(r_{\nu_{k-1}},r_{\nu_k}],$$which is true for all but finitely many $j\in\N$.
\end{proof}

In view of Theorem \ref{PP}, it is tempting to ask whether there exists any $\F$-basis that is not an~$\F$-basis with individual brackets.

\bibliographystyle{amsplain}

\end{document}